\definecolor{webgreen}{rgb}{0,.5,0}
\definecolor{webbrown}{rgb}{.6,0,0}
\def\N{\mathbb{N}}
\def\M{\mathscr{M}}
\def\mod{\mbox{mod}}
\theoremstyle{definition}
\newtheorem{defn}{Definition}
\newtheorem{theorem}{Theorem}
\newtheorem{lemma}[theorem]{Lemma}
\newtheorem{example}[theorem]{Example}
\newtheorem{corollary}[theorem]{Corollary}
\newtheorem{proposition}[theorem]{Proposition}
\newtheorem{remark}{Remark}
\begin{document}

\title{On guessing whether a sequence has a certain property}

\author{Samuel Alexander\\ The Ohio State University\\ Department of Mathematics\\ 231 West 18th Avenue,\\ Columbus OH 43210\\ USA\\
\href{mailto:alexander@math.ohio-state.edu}{\tt alexander@math.ohio-state.edu}}
\date{}

\maketitle

\begin{abstract}
A concept of ``guessability'' is defined for sets of sequences of naturals.
Eventually, these sets are thoroughly characterized.
To do this, a nonstandard logic is developed, a logic containing symbols for the ellipsis as well
as for functions without fixed arity.
\end{abstract}

\section{Motivation}

Suppose Alice and Bob are playing a game.  Alice is reading a fixed sequence, one entry at a time.
Bob is trying to guess whether 0 is in the sequence.  He can revise his guess with each new revealed
entry, and he wins if his guesses converge to the correct answer.  He has an obvious strategy: always
guess no, until 0 appears (if ever), then guess yes forever.  The set of
sequences containing 0 is guessable.

Suppose, instead, Bob is trying to guess whether Alice's sequence contains \emph{infinitely many} zeroes.
We will see there is no strategy, not even if Bob has unlimited computation power.  The set of sequences
with infinitely many zeroes is unguessable.

A sequence $f:\N\to\N$ is \emph{onto} if $\forall m\,\exists n\,f(n)=m$.  This definition uses
nested quantifiers: quantifiers appear in the scope of other quantifiers.  Is it possible to give an alternate 
definition without nested quantifiers?  The answer is ``no'', but how to prove it?  We will give a proof of a very
strong negative answer, strong in the sense that nested quantifiers cannot be eliminated even in an
extremely powerful language.  Of course, the technique generalizes to a wide class of sets of sequences, not just the onto 
sequences.

\section{Basics}

Let $\N^{\N}$ be the set of sequences $f:\N\to\N$, and let $\N^{<\N}$ be the set of finite sequences.

\begin{defn}
A function $G:\N^{<\N}\to\{0,1\}$ \emph{guesses} (and is a \emph{guesser} for)
a set $S\subseteq\N^{\N}$ if for every $f:\N\to\N$, there exists some $m>0$ such that for all $n>m$,
\[
G(f(0),\ldots,f(n))=\begin{cases} 1, & \text{if $f\in S$;}\\ 0, & \text{if $f\not\in 
S.$}\end{cases}
\]
A set $S\subseteq\N^{\N}$ is \emph{guessable} if it has a guesser.
\end{defn}

The next test is very useful for showing nonguessability, though its converse is not true.

\begin{theorem}
\label{2.2}
Let $S\subseteq\N^{\N}$.  Suppose that, for every finite sequence $g\in\N^{<\N}$,
there are sequences $g_1,g_2\in \N^{\N}$ extending $g$ with $g_1\in S$ and $g_2\not\in S$.  
Then $S$ is nonguessable.
\end{theorem}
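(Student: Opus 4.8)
The plan is to argue by contradiction: suppose some $G:\N^{<\N}\to\{0,1\}$ guesses $S$, and construct a single sequence $f\in\N^{\N}$ on which $G$ fails to converge. The idea is to build $f$ as the limit of an increasing chain of finite prefixes, using the hypothesis repeatedly to force $G$ to output $1$ and $0$ in alternation, so that $G$ never stabilizes along $f$ and hence cannot be guessing correctly (whether or not $f$ itself lies in $S$).

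First I would record the following extraction step. Fix a finite prefix $\sigma\in\N^{<\N}$. By hypothesis there is an infinite sequence $g_1\in S$ extending $\sigma$; since $G$ guesses $S$, there is some $m$ such that $G(g_1(0),\ldots,g_1(n))=1$ for all $n>m$. Choosing $n$ both larger than $m$ and large enough that $(g_1(0),\ldots,g_1(n))$ properly extends $\sigma$ yields a finite prefix $\tau\supseteq\sigma$, strictly longer than $\sigma$, with $G(\tau)=1$. Symmetrically, using an infinite $g_2\not\in S$ extending $\sigma$, one obtains a finite prefix $\tau'\supseteq\sigma$, strictly longer than $\sigma$, with $G(\tau')=0$. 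The purpose of this step is to convert the hypothesis, which speaks about membership of infinite sequences, into statements about the finite values of $G$.

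With that in hand, I would define finite prefixes $\sigma_0\subseteq\sigma_1\subseteq\cdots$ recursively. Start with $\sigma_0$ the empty sequence. Given $\sigma_{2k}$, apply the extraction step with an element of $S$ to obtain a strictly longer $\sigma_{2k+1}\supseteq\sigma_{2k}$ with $G(\sigma_{2k+1})=1$; given $\sigma_{2k+1}$, apply it with an element not in $S$ to obtain a strictly longer $\sigma_{2k+2}\supseteq\sigma_{2k+1}$ with $G(\sigma_{2k+2})=0$. Since the lengths strictly increase, the union $f=\bigcup_k\sigma_k$ is a genuine infinite sequence in $\N^{\N}$. Along $f$ the guesser takes value $1$ at every $\sigma_{2k+1}$ and value $0$ at every $\sigma_{2k+2}$, hence outputs both $0$ and $1$ at arbitrarily long prefixes. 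Therefore no threshold $m$ can satisfy the definition of guessing for this $f$, contradicting the assumption that $G$ guesses $S$.

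The main obstacle is really the extraction step, and in particular guaranteeing a strict extension: I must take the finite prefix of $g_1$ (respectively $g_2$) long enough both to properly extend $\sigma$ and to pass the convergence threshold furnished by the guessing definition. Once that is arranged, the alternating construction is routine, and the conclusion that an oscillating guesser cannot be correct is immediate from the requirement that $G$'s output stabilize along every sequence.
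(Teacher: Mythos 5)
Your proposal is correct and follows essentially the same strategy as the paper's proof: build $f$ as a union of strictly increasing finite prefixes, alternately extending into $S$ and out of $S$ and using the guesser's convergence on each infinite witness to force $G$ to output $1$ and $0$ on arbitrarily long prefixes of $f$. The care you take in the extraction step to get a strict extension corresponds to the paper's requirement that $x_{k+1}>x_k$.
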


\begin{proof}
Suppose $S$ has a guesser $G$.  I will define a sequence $f:\N\to\N$ such that
$G(f(0),\ldots,f(n))$ fails to converge, which violates the definition of guesser.

Clearly $S\not=\emptyset$, so let $s_1:\N\to\N$ be some sequence in $S$.  By definition of guesser, we can find
some $x_1$ such that $G(s_1(0),\ldots,s_1(x_1))=1$.  Let $f(0)=s_1(0),\ldots,f(x_1)=s_1(x_1)$.

Inductively, suppose $x_1<\cdots<x_k$ and $f(0),\ldots,f(x_k)$ are defined such that 
$G(f(0),\ldots,f(x_i))\equiv 
i\,(\mod\,2)$
for $i=1,\ldots,k$.
By the theorem's hypothesis, we can find some $s_{k+1}:\N\to\N$, extending the finite sequence 
$(f(0),\ldots,f(x_k))$,
such that $s_{k+1}$ is in $S$ iff $k+1\equiv 1\,(\mod\,2)$.
By definition of guesser, find $x_{k+1}>x_k$ such that $G(s_{k+1}(0),\ldots,s_{k+1}(x_{k+1}))\equiv 
k+1\,(\mod\,2)$.
Let $f(x_k+1)=s_{k+1}(x_k+1),\ldots,f(x_{k+1})=s_{k+1}(x_{k+1})$.

This defines sequences $f:\N\to\N$ and $x_1<x_2<\cdots$ with the property that $G(f(0),\ldots,f(x_i))\equiv 
i\,(\mod\,2)$ for every $i>0$.
This contradicts that $G(f(0),\ldots,f(n))$ is supposed to converge.
\end{proof}

Using the above test, we can immediately confirm, for example, the set of sequences containing infinitely many zeroes is nonguessable, as is
the set of onto sequences.

\begin{remark}
Theorem~\ref{2.2} is constructive up to certain choices.  Starting with a set $S$ satisfying the hypotheses of 
Theorem~\ref{2.2} and naively 
trying to guess it, and being systematic in the choices from the proof,
can lead to the creation of a concrete sequence which thwarts the naive guessing attempt.
In an informal sense, it should be especially difficult for someone not in the know
to guess whether the resulting sequence lies in $S$.  And the more sophisticated the futile guessing attempt,
the more difficult the resulting sequence becomes.  For some explicit examples, see sequences A082691, 
A182659, and A182660
in Sloane's OEIS \cite{OEIS}.\end{remark}

Tsaban and Zdomskyy also briefly mention a somewhat similar notion of guessable sets in their paper 
\cite{tsaban}.

\section{A Logic for Ellipses}

Because guessers are functions which do not have ``arity'' in the usual sense, instead being defined on the whole space $\N^{<\N}$ of finite sequences,
and since we care so much about expressions like $G(f(0),\ldots,f(n))$, we will extend logic to mesh better with 
these sorts of expressions.  I assume familiarity with basic first-order logic, which 
Enderton \cite{enderton} has 
written about extensively, as has Bilaniuk \cite{bilaniuk}.

\begin{defn}
A \emph{language with ellipses} is a standard language of first-order logic,
with a constant symbol $\mathbf{0}$,
together with a set of \emph{function symbols of arity $\N^{<\N}$} and a special logical symbol $\cdots_x$ for every variable $x$.
\end{defn}

To avoid confusion, we will use $\cdots_x$ for the syntactical symbol and $\ldots$ for meta-ellipses.  For example, 
$G(s(\mathbf{0}),\ldots,s(\mathbf{2}))$
is a meta-abbreviation for \[G(s(\mathbf{0}),s(\mathbf{1}),s(\mathbf{2})),\] different than $G(s(\mathbf{0}),\cdots_x,s(\mathbf{2}))$ which has 
no counterpart in classical logic.

\begin{defn}If $\mathscr{L}$ is a language with ellipses, then the \emph{terms} of $\mathscr{L}$ 
(and their free variables) are defined
inductively:
\begin{enumerate}
\item For any variable $x$, $x$ is a term and $FV(x)=\{x\}$.
\item For any constant symbol $c$, $c$ is a term and $FV(c)=\emptyset$.
\item If $f$ is a function symbol of arity $n$ or arity $\N^{<\N}$, and $t_1,\ldots,t_n$ are terms, then 
$f(t_1,\ldots,t_n)$ is a term with free vars $FV(t_1)\cup\cdots\cup FV(t_n)$.
\item If $G$ is an $\N^{<\N}$-ary function symbol, and $u,v$ are terms, and $x$ is a variable, then 
$G(u(\mathbf{0}),\cdots_x,u(v))$ is a term with
free variables 
\[(FV(u)\backslash \{x\})\cup FV(v).\]
\end{enumerate}
The \emph{well-formed formulas} of $\mathscr{L}$ are defined as usual from these terms.
Term substitution is defined by the usual induction with two new cases:
\begin{itemize}
\item If $y\not=x$ then \[G(u(\mathbf{0}),\cdots_x,u(v))(y|t)=G(u(y|t)(\mathbf{0}),\cdots_x,u(y|t)(v(y|t))).\]
\item $G(u(\mathbf{0}),\cdots_x,u(v))(x|t)=G(u(\mathbf{0}),\cdots_x,u(v(x|t)))$.
\end{itemize}

A \emph{model} for a language with ellipses $\mathscr{L}$ is a model $\M$ for the classical part of $\mathscr{L}$, together with
a function $G^{\M}:\M^{<\N}\to\M$ for each $\N^{<\N}$-ary function symbol $G$ in $\mathscr{L}$.
However, defining how an arbitrary model evaluates terms is difficult.  We will only be interested in
one very specific family of models, where there is no trouble evaluating terms.
\end{defn}

\begin{defn}
The following models lie at the heart of all later results.

\begin{itemize}
\item $\mathscr{L}_{\mbox{max}}$ is the language with ellipses which contains a constant symbol $\mathbf{n}$ for
every $n\in\N$,
an $n$-ary function symbol $\tilde{w}$ for every function $w:\N^n\to\N$ ($n>0$),
an $n$-ary predicate symbol $\tilde{p}$ for every subset $p\subseteq \N^n$ ($n>0$),
an $\N^{<\N}$-ary function symbol $\tilde{G}$ for every function $G:\N^{<\N}\to\N$,
and one additional unary function symbol $\mathbf{f}$.
\item For every function $f:\N\to\N$, $\M_f$ is the model for the language $\mathscr{L}_{\mbox{max}}$ with
universe $\N$, which interprets $\mathbf{n}$ as $n$ for every $n$, $\tilde{w}$ as $w$
for every $w:\N^n\to\N$, $\tilde{p}$ as $p$ for every $p\subseteq\N^n$,
and $\tilde{G}$ as $G$ for every $G:\N^{<\N}\to\N$, and which interprets $\mathbf{f}$
as $f$.
\end{itemize}
\end{defn}

If $n\in\N$ then $\bar{n}$ denotes the numeral $\mathbf{n}$ of $n$.

\begin{defn}
Let $f:\N\to\N$.  The semantics of $\M_f$ are defined as follows.  Let $s$ be any assignment from the 
variables
to $\N$.
\begin{itemize}
\item $(\M_f,s)$ interprets terms $t$ into naturals $t^{\M_f,s}$, or $t^s$ if there is no ambiguity,
according to the usual inductive definition,
with one new case:
\begin{itemize}
\item If $u,v$ are terms and $x$ is a variable and $G$ is an $\N^{<\N}$-ary function symbol,
then
\[
G(u(\mathbf{0}),\cdots_x,u(v))^{s}=G^{\M_f}\left(u(x|\mathbf{0})^{s},\ldots, 
u\left(x\left|\overline{v^{s}}\right.\right)^{s}\right).\]
\end{itemize}

\item
For example, the interpretation of $\tilde{G}(\mathbf{f}(x)(\mathbf{0}),\cdots_x,\mathbf{f}(x)(\mathbf{99}))$ is
\[G(f(0),\ldots,f(99)),\]
while the interpretation of $\tilde{G}(\mathbf{f}(x)(\mathbf{0}),\cdots_x,\mathbf{f}(x)(\mathbf{f}(y)))$
is \[G(f(0),\ldots,f(f(y^{s}))).\]

\item From here, the remaining semantics of $\M_f$ are defined as usual.
\end{itemize}
\end{defn}

In classical logic, every term with no free variables has the property that
its interpretation in any model depends only on finitely many values of
the interpretations of the function symbols in that model.
For example, the interpretation of $5+(2\cdot 3)$ depends only on one value of $\cdot$ and
one value of $+$.
Similar properties are true of our $\M_f$ models.

\begin{lemma}
\label{3.5}
Suppose $u$ is a term with no free variables, and $c$ is a constant symbol.
For any $f:\N\to\N$, $\M_f\models u=c$ iff there is some $k$ such that
whenever $g:\N\to\N$ extends $(f(0),\ldots,f(k))$, $\M_g\models u=c$ and to check
whether $\M_g\models u=c$ using the inductive definition of semantics for $\M_g$,
it is not necessary to query $g(i)$ for any $i>k$.
\end{lemma}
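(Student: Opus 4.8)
The plan is to isolate the real content of the lemma---a finite-dependence statement for terms---prove it by induction, and then read off the stated equivalence. Note first that the only symbol whose interpretation in $\M_f$ depends on the sequence is $\mathbf f$: the symbols $\tilde w$, $\tilde p$, $\tilde G$ are interpreted by fixed functions and relations, independent of $f$. Hence every consultation of the sequence during an evaluation occurs at a subterm of the form $\mathbf f(t_1)$, and ``querying $g(i)$'' means precisely evaluating such a subterm whose argument $t_1$ has value $i$. I would therefore prove the following strengthened claim for \emph{every} term $t$ (free variables allowed) and every assignment $s$: there is a $k$ such that for every $g$ extending $(f(0),\ldots,f(k))$ we have $t^{\M_g,s}=t^{\M_f,s}$, and the inductive evaluation of $t^{\M_g,s}$ never queries $g(i)$ for any $i>k$. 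The $(\Leftarrow)$ direction of the lemma is then immediate, since $f$ itself extends $(f(0),\ldots,f(k))$; and for $(\Rightarrow)$, assuming $\M_f\models u=c$, I apply the claim to the closed term $u$ and observe that for every admissible $g$ we get $u^{\M_g}=u^{\M_f}=c^{\M_f}=c^{\M_g}$, the last equality because the constant $c$ is interpreted independently of the sequence.

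The claim is proved by induction on the height of $t$, and the one delicate point in the setup is the ellipsis term, whose evaluation invokes the terms $u(x|\bar{j})$; but since only \emph{numerals} are ever substituted for $x$, and a numeral is a leaf, substitution preserves height, so each $u(x|\bar{j})$ has the same height as $u$ and is therefore strictly shorter than $t$. Thus the inductive hypothesis reaches every piece the semantics produces. The easy cases are then routine: if $t$ is a variable or a numeral, no value of the sequence is consulted and $k=0$ suffices; if $t=\tilde w(t_1,\ldots,t_n)$, I take $k$ to be the maximum of the bounds for the $t_j$, so that agreement of $g$ with $f$ up to $k$ freezes each argument, hence freezes $\tilde w$ applied to them, with no new query. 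The single case in which a query actually occurs is $t=\mathbf f(t_1)$: with $i=t_1^{\M_f,s}$ and $k_1$ the bound for $t_1$, I set $k=\max(k_1,i)$; for admissible $g$ the inner value is still $i$, the evaluation then reads $g(i)=f(i)$ because $i\le k$, and no query exceeds $k$.

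The heart of the proof is the ellipsis term $t=\tilde G(u(\mathbf 0),\cdots_x,u(v))$, whose semantics unfold as $G^{\M_f}\!\left(u(x|\mathbf 0)^{s},\ldots,u\left(x\left|\overline{v^{s}}\right.\right)^{s}\right)$. I would first apply the inductive hypothesis to $v$, obtaining a bound $k_v$; this is exactly what forces $v^{\M_g,s}=v^{\M_f,s}=:m$ for all admissible $g$, so that $g$ and $f$ build tuples of \emph{the same length} $m+1$. Then for each of the finitely many $j\in\{0,\ldots,m\}$ I apply the hypothesis to the shorter term $u(x|\bar{j})$, obtaining $k_j'$, and set $k=\max(k_v,k_0',\ldots,k_m')$. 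Every rung stabilizes at once, so $G$ is fed the identical finite tuple under $\M_g$ and $\M_f$ and the union of all queries stays $\le k$. I expect this ellipsis step to be the main obstacle, and it is the conceptual point of the lemma: one must pin down the length $m$---itself a finitely determined natural, by the induction applied to $v$---before the per-rung bounds $k_0',\ldots,k_m'$ can be amalgamated into a single finite $k$; once the length is fixed, the maximum is over finitely many numbers and the argument closes.
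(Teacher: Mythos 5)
Your proposal is correct and takes essentially the same route as the paper: induction on the structure of the term, with the only query occurring in the $\mathbf{f}(t_1)$ case (bounded by $\max(k_1,i)$) and the ellipsis case handled by first freezing the length $v^{s}$ and then amalgamating the finitely many per-rung bounds. Your strengthening to arbitrary terms under an assignment, and your explicit observation that substituting numerals preserves height so the induction legitimately reaches the instances $u(x|\bar{j})$, are minor refinements of points the paper's proof leaves implicit (it substitutes numerals $\bar{a_i}$ to the same effect).
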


\begin{proof}
($\Rightarrow$)  Induction on complexity of $u$.

\begin{itemize}
\item Since $u$ has no free variables, $u$ cannot be a variable.  If $u$ is a constant symbol, the lemma is trivial.

\item Suppose that $u$ is $h(u_1,\ldots,u_n)$ for some $n$-ary (or $\N^{<\N}$-ary) function symbol $h$ other than 
$\mathbf{f}$, and some
terms $u_1,\ldots,u_n$
with no free variables.
If $\M_f\models h(u_1,\ldots,u_n)=c$, then there are $a_1,\ldots,a_n\in \N$ such that 
$h^{\M_f}(a_1,\ldots,a_n)=c^{\M_f}$
and $\M_f\models u_i=\bar{a_i}$ for $i=1,\ldots,n$.
Since $\bar{a_i}$ is a constant symbol, by induction find $k_1,\ldots,k_n$ such that for any $i=1,\ldots,n$ and
any $g:\N\to\N$ with $g(0)=f(0),\ldots,g(k_i)=f(k_i)$, $\M_g\models u_i=\bar{a_i}$, and checking this by 
definition of
semantics does not require querying $g(j)$ for any $j>a_i$.
Then $k=\max\{k_1,\ldots,k_n\}$ works (using the fact $h^{\M_g}$ does not depend on $g$ since $h$ is not 
$\mathbf{f}$).

\item Next, suppose $u$ is $\mathbf{f}(v)$ where $v$ is a term with no free variables.
If $\M_f\models \mathbf{f}(v)=c$ then there iss $a\in\N$ such that $f(a)=c^{\M_f}$ and $\M_f\models v=\bar{a}$.
Since $\bar{a}$ is a constant symbol, by induction find $k_0$ such that whenever 
$g(0)=f(0),\ldots,g(k_0)=f(k_0)$,
then $\M_g\models v=\bar{a}$, and checking $\M_g\models v=\bar{a}$ does not require querying $g(i)$ for any 
$i>k_0$.
Let $k=\max\{k_0,a\}$.
Suppose $g(0)=f(0),\ldots,g(k)=f(k)$.  Then 
$\mathbf{f}(v)^{\M_g}=\mathbf{f}^{\M_g}(v^{\M_g})=g(a)=f(a)=c^{\M_f}$.
So $\M_g\models \mathbf{f}(v)=c$, and to check so, we only had to query $g(a)$ in addition to any queries we
had to make to check $\M_g\models v=\bar{a}$, so we did not have to query $g(i)$ for any $i>k$.

\item Finally, suppose $u$ is $G(v(\mathbf{0}),\cdots_x,v(w))$ where $v,w$ are terms, $x$ is a variable, $FV(w)=\emptyset$, 
$FV(v)\subseteq \{x\}$, and $G$ is an $\N^{<\N}$-ary function symbol.
If $\M_f\models G(v(\mathbf{0}),\cdots_x,v(w))=c$ then
\[G^{\M_f}\left(v(x|\mathbf{0})^{\M_f},\ldots,v\left(x\left|\overline{w^{\M_f}}\right.\right)^{\M_f}\right)=c^{\M_f}.\]
Since $\M_f\models w=\overline{w^{\M_f}}$,
find some number $k_{-1}$ such that whenever $g$ extends $(f(0),...,f(k_{-1}))$, $\M_g\models w=\overline{w^{\M_f}}$
and checking so does not require queries beyond $g(k_{-1})$.
Since $\M_f\models v(x|\mathbf{i})=\overline{v(x|\mathbf{i})}$ for $i=0,\ldots,w^{\M_f}$,
find $k_0,\ldots,k_{w^{\M_f}}$ such that for each $i=0,\ldots,w^{\M_f}$, if $g(0)=f(0),\ldots,g(k_i)=f(k_i)$ then
$\M_g\models v(x|\overline{i})=\overline{v(x|\overline{i})}$ can be confirmed without querying $g$ beyond $g(k_i)$.

Let $k=\max\{k_{-1},k_0,\ldots,k_{w^{\M_f}}\}$.  Suppose $g(0)=f(0),\ldots,g(k)=f(k)$.
Then $\M_g\models w=\overline{w^{\M_f}}$, so $w^{\M_g}=w^{\M_f}$.
Similarly $v(x|\mathbf{i})^{\M_g}=v(x|\mathbf{i})^{\M_f}$ for $i=0,\ldots,w^{\M_f}$.
And $G^{\M_g}=G^{\M_f}$.  It follows that \[\M_g\models G(v(\mathbf{0}),\cdots_x,v(w))=c,\]
and checking so does not require any queries to $g(j)$ for any $j>k$.
\end{itemize}

($\Leftarrow$)  Suppose there is some $k$ so that whenever $g$ extends $(f(0),\ldots,f(k))$ then $\M_g\models 
u=c$.
In particular, $f$ itself extends $(f(0),\ldots,f(k))$, so $\M_f\models u=c$.
\end{proof}

\begin{corollary}
\label{3.6}
Let $\phi$ be a quantifier-free sentence.
For any $f:\N\to\N$, $\M_f\models \phi$ iff there is some $k$
such that for every $g:\N\to\N$ extending $(f(0),\ldots,f(k))$, $\M_g\models \phi$,
and in checking $\M_g\models\phi$ by the inductive definition of semantics,
we never need to query $g(i)$ for any $i>k$.
\end{corollary}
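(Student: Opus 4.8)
The plan is to prove the Corollary by structural induction on the quantifier-free sentence $\phi$, reducing the atomic case to Lemma~\ref{3.5} and then propagating the conclusion through the Boolean connectives. Since $\phi$ is a sentence it has no free variables, so every term occurring in an atomic subformula is a term with no free variables; note that an ellipsis term $G(v(\mathbf{0}),\cdots_x,v(w))$ binds its own $x$, so such terms are still closed and are covered by the final case of Lemma~\ref{3.5}. The ($\Leftarrow$) direction is immediate exactly as in Lemma~\ref{3.5}: since $f$ itself extends $(f(0),\ldots,f(k))$, any $k$ witnessing the right-hand side already forces $\M_f\models\phi$. All the work lies in ($\Rightarrow$).

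The anticipated obstacle is \emph{negation}: knowing only that $\M_f\models\phi$ yields a bound $k$ gives no direct handle on $\M_f\models\neg\phi$. To finesse this, I would not induct on the one-directional statement of the Corollary itself but on the stronger symmetric claim
\[(\ast)\qquad \exists k\ \forall g \text{ extending } (f(0),\ldots,f(k)):\quad \M_g\models\phi \text{ iff } \M_f\models\phi,\]
together with the clause that deciding the status of $\M_g\models\phi$ never requires querying $g(i)$ for $i>k$. Because $\M_f\models\phi$ is simply true or false, $(\ast)$ immediately yields the Corollary; and since the biconditional is symmetric in truth value, the same witness $k$ that works for $\phi$ works verbatim for $\neg\phi$, making the negation step trivial.

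For the base case, an atomic sentence is either $t_1=t_2$ or $\tilde{p}(t_1,\ldots,t_n)$ with each $t_i$ a closed term. Applying Lemma~\ref{3.5} to each equation $t_i=\overline{t_i^{\M_f}}$ produces bounds $k_i$ past which the value $t_i^{\M_g}$ is locally pinned to $t_i^{\M_f}$ without deep queries into $g$. Taking $k=\max_i k_i$ and using that the interpretations $\tilde{p}^{\M_g}$ and $=$ do not depend on $g$, the truth value of the atomic formula in $\M_g$ must agree with its value in $\M_f$, and the query bound is inherited directly from Lemma~\ref{3.5}. For the inductive step, $\neg\psi$ reuses the witness for $\psi$ as just explained, while $\psi_1\wedge\psi_2$ (and dually $\vee$, $\rightarrow$) takes $k=\max\{k_1,k_2\}$ for the witnesses $k_1,k_2$ supplied by the induction hypothesis: the truth values of both constituents transfer to every extension $g$, hence so does the truth value of the connective, and the combined queries remain bounded by $k$. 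This establishes $(\ast)$ for all quantifier-free $\phi$, and the Corollary follows.
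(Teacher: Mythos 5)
Your proof is correct and follows essentially the same route as the paper's: induction on the complexity of $\phi$, with the atomic case reduced to Lemma~\ref{3.5} by pinning down the values of the closed terms, and the binary connectives handled by taking the maximum of the witnesses. The one substantive difference is your treatment of negation. The paper dismisses it with ``the cases of other propositional connectives are similar,'' but as you observe, the Corollary as literally stated is not strong enough to push through $\neg\psi$ by direct induction: knowing that $\M_f\models\psi$ is \emph{equivalent to} the existence of a stabilizing $k$ does not by itself produce a stabilizing $k$ witnessing $\M_f\not\models\psi$. Your strengthened symmetric induction hypothesis $(\ast)$ --- some $k$ forces $\M_g\models\phi$ iff $\M_f\models\phi$ for every extension $g$, with queries bounded by $k$ --- is exactly the right fix, and it is what the paper's atomic case actually establishes (the \emph{values} of closed terms, not merely the truth of the equation, are locally determined), so the repair is faithful to the argument's intent. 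The only other, cosmetic, difference is that you handle $\tilde{p}(t_1,\ldots,t_n)$ directly by applying Lemma~\ref{3.5} to each $t_i$, whereas the paper reduces this case to the equality case by replacing $\tilde p$ with the characteristic function of $p$; both work.
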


\begin{proof}
By induction on the complexity of $\phi$.
\begin{itemize}
\item Suppose $\phi$ is $u=v$ for terms $u,v$ with no free variables.
Assume $\M_f\models u=v$.  Then $\M_f\models u=\overline{u^{\M_f}}$ and $\M_f\models v=\overline{u^{\M_f}}$.
By Lemma~\ref{3.5}, find $k$ big enough that whenever $g(0)=f(0),\ldots,g(k)=f(k)$, then $\M_g\models 
u=\overline{u^{\M_f}}$
and $\M_g\models v=\overline{u^{\M_f}}$ and both facts can be confirmed without
querying $g$ beyond $g(k)$.  For any such $g$, $\M_g\models u=v$, verifiable with no additional $g$-queries.  The converse is trivial.

\item Next, suppose $\phi$ is $\tilde{p}(u_1,\ldots,u_n)$ for an $n$-ary predicate symbol $\tilde{p}$ and terms 
$u_1,\ldots,u_n$ with no free variables.
Then $\phi$ is equivalent (in every $\M_{\cdot}$) to $\tilde{g}(u_1,\ldots,u_n)=\mathbf{1}$ where $g$ is the 
characteristic function
of $p$, so we are done by the previous case.

\item Suppose $\phi$ is $\phi_1\wedge\phi_2$.  Assume $\M_f\models \phi$.  Inductively, find $k_1$ and $k_2$ such 
that if $g$ extends $(f(0),\ldots,f(k_i))$
then $\M_g\models \phi_i$ is verifiable
with no $g$-queries beyond $g(k_i)$.  Then any $g$ extending $(f(0),\ldots,f(\max\{k_1,k_2\}))$ has $\M_g\models 
\phi$,
verifiable without querying beyond $g(\max\{k_1,k_2\})$.  The converse is trivial.

\item The cases of other propositional connectives are similar.
\end{itemize}
\end{proof}

If $s$ is an assignment from the variables of a language onto the universe of the language, and if $x$ is a variable,
and $n$ is a number, then $s(x|n)$ denotes the assignment which is identical to $s$ except that it maps $x$ to $n$.
Similarly if a model is understood by context and $c$ is a constant symbol then $s(x|c)$ denotes the assignment identical to $s$ except that it maps 
$x$
to the interpretation of $c$ in the model.

\begin{lemma}
\label{3.7}
\emph{(The Weak Substitution Lemma)}
For a formula $\phi$, an assignment $s$, and a constant symbol $c$, and 
for any $f:\N\to\N$, $\M_f\models \phi[s(x|\overline{c^{s}})]$ iff $\M_f\models \phi(x|c)[s]$.
\end{lemma}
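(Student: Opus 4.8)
The plan is to reduce the claim to a term-level substitution identity and then induct up through the logical connectives, exploiting throughout that $c$, being a constant symbol, is a \emph{closed} term whose interpretation $c^{s}=c^{\M_f}$ does not depend on the assignment $s$. This closedness is exactly what makes the lemma ``weak'', and simultaneously what rules out the variable-capture phenomena that would obstruct a full substitution lemma for arbitrary terms in place of $c$. Throughout, the induction hypothesis is understood to range over all assignments $s$ at once, so that it may be reapplied under modified assignments such as $s(y|a)$.

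First I would establish the term analogue: for every term $t$,
\[
t^{s(x|\overline{c^{s}})}=\bigl(t(x|c)\bigr)^{s}.
\]
This I prove by induction on the construction of terms. The cases $t=x$, $t=y\neq x$, and $t$ a constant are immediate, and the ordinary and $\mathbf{f}$ function-symbol cases follow by pushing the identity through the subterms. The one case requiring care is the ellipsis term $t=G(u(\mathbf{0}),\cdots_z,u(v))$, where I split on whether the bound variable $z$ equals the substitution variable $x$. If $z=x$, the substitution rule leaves the body $u$ untouched and rewrites only the index term $v$; comparing the two evaluations argument-by-argument, the upper index matches by the inductive hypothesis applied to $v$, and each body entry matches because the evaluation semantics overwrites $z=x$ with the numerals $\mathbf{0},\ldots,\overline{v^{s}}$ before $s$ ever acts on it, so the altered value $s(x)=c^{s}$ is irrelevant. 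If $z\neq x$, both the substitution and the reassignment pass through to $u$ and $v$: the upper index again matches by the inductive hypothesis on $v$, and each body entry $u(z|\mathbf{i})$ matches by the inductive hypothesis on the (simpler) term $u(z|\mathbf{i})$, after commuting the two closed-term substitutions via $(u(z|\mathbf{i}))(x|c)=(u(x|c))(z|\mathbf{i})$.

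Step 2 lifts this to formulas by induction on $\phi$. Atomic formulas are handled directly by Step~1, predicate atoms $\tilde{p}(u_1,\ldots,u_n)$ first being rewritten as equalities $\tilde{g}(u_1,\ldots,u_n)=\mathbf{1}$ exactly as in the proof of Corollary~\ref{3.6}, and the propositional connectives are routine. The quantifier case $\phi=\exists y\,\psi$ (and dually $\forall$) splits on $y$ versus $x$: when $y=x$ the variable $x$ is not free in $\phi$, so $\phi(x|c)=\phi$ and $s(x|\overline{c^{s}})$ agrees with $s$ on $FV(\phi)$, making both sides identical; when $y\neq x$ I push the quantifier outward and apply the inductive hypothesis to $\psi$ under $s(y|a)$, commuting the two assignment modifications as $s(y|a)(x|\overline{c^{s}})=s(x|\overline{c^{s}})(y|a)$---legitimate precisely because $y\neq x$ and because $c^{s}$, coming from a closed term, is independent of the value assigned to $y$ (so $\overline{c^{\,s(y|a)}}=\overline{c^{s}}$).

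The step I expect to be the main obstacle is the ellipsis term in Step~1, and within it the subcase $z=x$ where the bound variable of the ellipsis coincides with the variable being substituted. There one must verify that evaluating $G(u(\mathbf{0}),\cdots_x,u(v))$ under $s(x|\overline{c^{s}})$ hands $G^{\M_f}$ the same list of arguments as evaluating $G(u(\mathbf{0}),\cdots_x,u(v(x|c)))$ under $s$. Making the ``the reassignment never reaches the bound body'' argument precise rests on a routine coincidence lemma---that a term's value depends only on the assignment's restriction to its free variables, and that $x\notin FV(u(x|\mathbf{i}))$ once the free occurrences of $x$ have been replaced by a numeral---which I would either invoke as a standard fact or dispatch by a parallel induction. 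The remaining bookkeeping (the commutativity of closed-term substitutions for distinct variables used in the $z\neq x$ subcase) is entirely standard.
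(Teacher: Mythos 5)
Your proposal is correct and follows essentially the same route as the paper: a standard substitution-lemma induction whose only genuinely new content is the two ellipsis-term cases, split according to whether the bound ellipsis variable coincides with the substituted variable, resolved in the one case by noting that the body entries $u(x|\mathbf{i})$ no longer depend on $x$ and in the other by commuting the two closed-term substitutions before applying the induction hypothesis. You are somewhat more explicit than the paper about the bookkeeping (the coincidence lemma and the commutation identity), but the argument is the same.
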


\begin{proof}
By the inductive argument used to prove the full Substitution Lemma in classical logic, most of which we omit.
But there are tricky new cases for our new
terms.

\textbf{Claim:} For any terms $u,v$, constant symbol $c$, variables $x\not=y$, and assignment $s$,
\[
G(u(\mathbf{0}),\cdots_x,u(v))(y|c)^s = G(u(\mathbf{0}),\cdots_x,u(v))^{s(y|c)}.\]
The details are (using the induction hypothesis repeatedly) as follows:
\begin{align*}
G(u(\mathbf{0}),\cdots_x,u(v))(y|c)^s
&=
G\left(u(y|c)(\mathbf{0}),\cdots_x,u(y|c)(v(y|c))\right)^s\\
&=
G^{\M_f}\left(u(y|c)(x|\mathbf{0})^s,\ldots,u(y|c)\left(x\left|\overline{v(y|c)^s}\right.\right)^s\right)\\
&=
G^{\M_f}\left(u(x|\mathbf{0})^{s(y|c)},\ldots,u\left(x\left|\overline{v^{s(y|c)}}\right.\right)^{s(y|c)}\right)\\
&=
G(u(\mathbf{0}),\cdots_x,u(v))^{s(y|c)}.\end{align*}

\textbf{Claim:} For any terms $u,v$, constant symbol $c$, and variable $x$ and assignment $s$,
\[
G(u(\mathbf{0}),\cdots_x,u(v))(x|c)^s = G(u(\mathbf{0}),\cdots_x,u(v))^{s(x|c)}.\]
Using the induction hypothesis repeatedly:
\begin{align*}
G(u(\mathbf{0}),\cdots_x,u(v))(x|c)^s
&=
G(u(\mathbf{0}),\cdots_x,u(v(x|c)))^s\\
&=
G^{\M_f}\left( u(x|\mathbf{0})^s,\ldots, u\left(x\left|\overline{v(x|c)^s}\right.\right)^s\right)\\
&=
G^{\M_f}\left( u(x|\mathbf{0})^s,\ldots,
u\left(x\left|\overline{v^{s(x|c)}}\right.\right)^s\right)\\
&=
G^{\M_f}\left( u(x|\mathbf{0})^{s(x|c)},\ldots,
u\left(x\left|\overline{v^{s(x|c)}}\right.\right)^{s(x|c)}\right)\\
&=
G(u(\mathbf{0}),\cdots_x,u(v))^{s(x|c)}.\end{align*}
The next to last equation is justified because the terms whose ``exponents'' are changed do not depend on $x$.
\end{proof}

A full Substitution Lemma is also true, but it requires a nonclassical definition of \emph{substitutable}, which 
would take us too far afield.

\section{Guessability and Quantifiers}

\begin{defn}
Let $S\subseteq \N^{\N}$ be a set of sequences.  Let $\phi$ be a sentence in $\mathscr{L}_{\mbox{max}}$.
We say that $\phi$ \emph{defines} $S$ if, for every $f:\N\to\N$, $\M_f\models \phi$ iff $f\in S$.
\end{defn}

\begin{theorem}
\label{4.7}
A set $S\subseteq\N^{\N}$ is guessable if and only if it is defined
by some sentence $\forall x\,\exists y\,\phi$ and also by some sentence $\exists x\,\forall y\,\psi$,
where $\phi$ and $\psi$ are quantifier-free.
\end{theorem}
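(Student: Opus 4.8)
The plan is to prove the two directions separately; the forward direction is a routine encoding, while the reverse direction carries the real content.

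For ($\Rightarrow$), suppose $G$ is a guesser for $S$. Since $G:\N^{<\N}\to\{0,1\}\subseteq\N$, it is among the $\N^{<\N}$-ary symbols and is interpreted by $\tilde{G}$. I would form the ellipsis term $\tau=\tilde{G}(\mathbf{f}(z)(\mathbf{0}),\cdots_z,\mathbf{f}(z)(y))$, whose only free variable is $y$; by the ellipsis semantics $\tau^{s}=G(f(0),\ldots,f(s(y)))$ in $\M_f$. The defining property of a guesser says exactly that $G(f(0),\ldots,f(n))$ converges, to $1$ iff $f\in S$. Since the guesses converge, ``$f\in S$'' is equivalent to ``the guesses are eventually $1$'', giving the $\exists\forall$ sentence $\exists x\,\forall y\,((y>x)\to\tau=\mathbf{1})$, and it is also equivalent to ``the guesses are $1$ infinitely often'' (equivalent, under convergence, to eventually $1$), giving the $\forall\exists$ sentence $\forall x\,\exists y\,((y>x)\wedge\tau=\mathbf{1})$. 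Here $>$ is available as a predicate symbol of $\mathscr{L}_{\mbox{max}}$, and both sentences have the required shape with quantifier-free matrices, so each defines $S$.

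For ($\Leftarrow$), suppose $\forall x\,\exists y\,\phi$ and $\exists x\,\forall y\,\psi$ both define $S$, with $\phi,\psi$ quantifier-free and free variables among $\{x,y\}$. By the Weak Substitution Lemma (Lemma~\ref{3.7}), for numerals $\bar{a},\bar{b}$ one has $\M_f\models\phi[s(x|\bar{a})(y|\bar{b})]$ iff $\M_f\models\phi(x|\bar{a})(y|\bar{b})$, the latter a quantifier-free \emph{sentence}; likewise for $\psi$. Writing $\Phi(a,b,f)$ and $\Psi(a,b,f)$ for the truth values of these sentences in $\M_f$, membership is characterized two ways: $f\in S$ iff $\forall a\,\exists b\,\Phi(a,b,f)$, and also iff $\exists a\,\forall b\,\Psi(a,b,f)$. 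The crucial finite-stage fact, obtained by applying Corollary~\ref{3.6} to each sentence \emph{and} to its negation, is that every $\Phi(a,b,\cdot)$ and $\Psi(a,b,\cdot)$ is decided by some finite prefix: there is a stage $n$ at which the inductive evaluation on $(f(0),\ldots,f(n))$ terminates querying only indices $\le n$, returning the correct value stably for all larger $n$. Thus the guesser, reading $(f(0),\ldots,f(n))$, can compute for all $a,b\le n$ whether $\Phi(a,b,f)$ (resp.\ $\Psi$) is decided by stage $n$, and its value. No computability of the guesser is needed, which is fortunate since the symbols $\tilde{w},\tilde{G}$ are interpreted by arbitrary functions.

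The construction races the two definitions. Call $a$ \emph{$\Psi$-live} at stage $n$ if no $b\le n$ has $\Psi(a,b,f)$ decided-false by stage $n$, and let $\alpha(n)$ be the least $\Psi$-live $a\le n$ (or $n+1$ if none). Dually, call $a$ \emph{$\neg\Phi$-live} at stage $n$ if no $b\le n$ has $\Phi(a,b,f)$ decided-true, and let $\beta(n)$ be the least such $a\le n$ (or $n+1$). Define $G(f(0),\ldots,f(n))=1$ iff $\alpha(n)<\beta(n)$. Since decisions are stable, liveness is monotone. If $f\in S$, the least $a^{*}$ with $\forall b\,\Psi(a^{*},b,f)$ stays $\Psi$-live forever while each $a<a^{*}$ is eventually killed, so $\alpha(n)\to a^{*}$; meanwhile $\forall a\,\exists b\,\Phi(a,b,f)$ forces every $a$ to be killed on the $\Phi$-side, so $\beta(n)\to\infty$, and eventually $\alpha(n)<\beta(n)$. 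If $f\notin S$, symmetrically $\beta(n)$ converges to the least $\neg\Phi$-witness while $\alpha(n)\to\infty$, so eventually $\alpha(n)\ge\beta(n)$. Hence $G$ converges correctly and guesses $S$.

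I expect the main obstacle to be exactly the observation that \emph{neither} definition suffices alone: using only $\exists\forall\,\psi$, the least live witness converges when $f\in S$ but merely diverges to infinity when $f\notin S$, and divergence is not finitely detectable, so the guess would fail to settle at $0$. The essential trick is to pit the membership search from $\exists\forall\,\psi$ against the non-membership search coming from the negated $\forall\exists\,\phi$; exactly one stabilizes while the other diverges, and comparing $\alpha(n)$ with $\beta(n)$ converts ``one converges, the other diverges'' into a convergent binary guess. The remaining work is the careful verification of the finite-stage decidability and stability claims via Corollary~\ref{3.6} and Lemma~\ref{3.7}.
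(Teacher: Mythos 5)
Your proposal is correct and follows essentially the same route as the paper: the forward direction is the paper's Lemma~\ref{4.2} verbatim, and your $\alpha(n)$ and $\beta(n)$ are precisely the paper's two ``overguessers'' $\mu$ and $\nu$ (built from $\exists x\,\forall y\,\psi$ for $S$ and $\exists x\,\forall y\,\neg\phi$ for $S^c$ via Corollary~\ref{3.6} and the Weak Substitution Lemma), with the guess determined by comparing them. The only difference is presentational: the paper isolates the intermediate notion of overguessability as a named definition and lemma, while you inline it.
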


We
divide the proof of the theorem above into a sequence of lemmata.

\begin{lemma}
\label{4.2}
Suppose $S\subseteq \N^{\N}$ is guessable.  Then $S$ is defined by 
some sentence $\exists x\,\forall y\,\phi$ and also
by some sentence $\forall x\,\exists y\,\psi$, where $\phi$ and $\psi$ are quantifier-free.
\end{lemma}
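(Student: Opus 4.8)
The plan is to feed the guesser itself into the logic as an $\N^{<\N}$-ary function symbol and to read off the two quantifier prefixes from the two equivalent ways of saying that a convergent $\{0,1\}$-valued sequence has limit $1$. Fix a guesser $G:\N^{<\N}\to\{0,1\}$ for $S$. Viewing $G$ as a function into $\N$, the language $\mathscr{L}_{\mathrm{max}}$ provides the symbol $\tilde G$, interpreted as $G$ in every $\M_f$; and letting $p=\{(a,b)\in\N^2:a>b\}$, it provides the binary predicate symbol $\tilde p$.

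First I would manufacture a term naming the guesser's verdict on an initial segment of $f$ of variable length. Mimicking the worked example following the semantic definition, set
\[
t_y \;:=\; \tilde G\bigl(\mathbf f(z)(\mathbf 0),\cdots_z,\mathbf f(z)(y)\bigr),
\]
a term whose only free variable is $y$ (the ellipsis variable $z$ is bound by $\cdots_z$). Applying the semantic clause for ellipsis terms, and using that $\mathbf f(z)(z|\overline i)$ evaluates to $f(i)$, gives, for every $f$ and assignment $s$,
\[
t_y^{\,\M_f,s}
= G^{\M_f}\bigl(\mathbf f(z)(z|\mathbf 0)^{s},\ldots,\mathbf f(z)(z|\overline{y^{s}})^{s}\bigr)
= G\bigl(f(0),\ldots,f(y^{s})\bigr).
\]
So $t_y$ names precisely $G$'s output on the segment $(f(0),\ldots,f(y^{s}))$.

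Next I would set
\[
\Phi := \exists x\,\forall y\,\bigl(\tilde p(y,x)\to t_y=\mathbf 1\bigr),
\qquad
\Psi := \forall x\,\exists y\,\bigl(\tilde p(y,x)\wedge t_y=\mathbf 1\bigr),
\]
both of the prescribed shape with quantifier-free matrices (the symbol $\cdots_z$ is part of term formation, not a quantifier), and verify $\M_f\models\Phi\iff f\in S\iff\M_f\models\Psi$ straight from the definition of guesser. If $f\in S$ then $G$ stabilizes at $1$, so some $m$ witnesses $\exists x$ in $\Phi$, while $1$ being output at arbitrarily large stages makes $\Psi$ true. If $f\notin S$ then $G$ stabilizes at $0$ past some $m_0$; taking $x=m_0$ shows no large $y$ gives output $\mathbf 1$, refuting $\Psi$, and for each candidate $x=m$ the stage $y=\max\{m,m_0\}+1$ refutes the implication inside $\Phi$.

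There is no real obstacle of principle here: the construction is forced by the shape of the definition of guessability. The only step demanding genuine care is the evaluation of the ellipsis term, where one must check against the semantic clause that $z$ ranges over $0,\ldots,y^{s}$, that substituting the numeral $\overline{y^{s}}$ at the upper endpoint is legitimate, and that $z$ is truly bound so that $\Phi$ and $\Psi$ are sentences rather than open formulas. Everything else is the bookkeeping of the convergence argument.
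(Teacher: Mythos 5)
Your proposal is correct and follows essentially the same route as the paper: both feed the guesser in as the $\N^{<\N}$-ary symbol $\tilde G$, form the ellipsis term $\tilde G(\mathbf f(z)(\mathbf 0),\cdots_z,\mathbf f(z)(y))$, and express ``$G$ converges to $1$'' by the two equivalent quantifier prefixes $\exists x\,\forall y\,(y>x\to\cdot)$ and $\forall x\,\exists y\,(y>x\wedge\cdot)$. Your verification of the ellipsis-term semantics and the convergence bookkeeping is just a more explicit version of what the paper leaves implicit.
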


\begin{proof}
Let $G$ be a guesser for $S$.
For any $f:\N\to\N$, $f\in S$ if and only if 
$G(f(0),\ldots,f(n))=1$ for all $n$ sufficiently large.
Therefore $S$ is defined by
\[
\exists x\, \forall y\, ((y>x)\rightarrow \tilde{G}(\mathbf{f}(z)(\mathbf{0}),\cdots_z,\mathbf{f}(z)(y))=\mathbf{1}),\]
where ``$y>x$'' is shorthand for $\tilde{>}(y,x)$.
Similarly, $S$ is also defined by
\[
\forall x\, \exists y\, ((y>x)\wedge \tilde{G}(\mathbf{f}(z)(\mathbf{0}),\cdots_z,\mathbf{f}(z)(y))=\mathbf{1}).\]
\end{proof}

We will prove the converse of Lemma~\ref{4.2} shortly.  To that end, a piece of technical machinery is needed.

\begin{defn}
A set $S\subseteq \N^{\N}$ is \emph{overguessable} if there is a function
$\mu:\N^{<\N}\to\N\cup\{\infty\}$ such that:
\begin{enumerate}
\item For every $f\in S$, $\mu(f(0),\ldots,f(n))$ is eventually bounded by a finite number.
\item For every $f\not\in S$, $\mu(f(0),\ldots,f(n))\to\infty$ as $n\to\infty$.
\end{enumerate}
\end{defn}

\begin{lemma}
\label{4.4}
Suppose $S\subseteq \N^{\N}$ is defined by the sentence $\exists x\,\forall y\,\phi$ where $\phi$ is quantifier-free.
Then $S$ is overguessable.
\end{lemma}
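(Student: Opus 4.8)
The plan is to extract a witness-tracking quantity from the definition $\M_f\models\exists x\,\forall y\,\phi$ and turn it into the function $\mu$ required for overguessability. Since $S$ is defined by $\exists x\,\forall y\,\phi$, we have $f\in S$ if and only if there is some $a\in\N$ making $\M_f\models(\forall y\,\phi)[s(x|\bar a)]$ true, which by the Weak Substitution Lemma (Lemma~\ref{3.7}) is equivalent to $\M_f\models\forall y\,\phi(x|\mathbf{a})$, i.e.\ to $\M_f\models\phi(x|\mathbf{a})(y|\mathbf{b})$ for every $b\in\N$. The key idea is that for each candidate witness $a$, the sentence $\forall y\,\phi(x|\mathbf{a})$ either holds (in which case $f\in S$ via witness $a$) or fails at some least $b$; and by Corollary~\ref{3.6}, whether the quantifier-free sentence $\phi(x|\mathbf{a})(y|\mathbf{b})$ holds is decidable from a finite initial segment of $f$ without querying beyond some bound.

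First I would define, for each finite sequence $\sigma=(f(0),\ldots,f(n))$, a value $\mu(\sigma)$ measuring how good the ``current best'' witness is. The natural choice is to let $\mu(\sigma)$ be the least $a$ such that $\phi(x|\mathbf{a})(y|\mathbf{b})$ is confirmed true for all $b$ up to $n$ (or up to the largest $b$ whose truth value is decidable from $\sigma$ without overquerying), and to set $\mu(\sigma)=\infty$ if no such $a\le n$ exists. More carefully, using Corollary~\ref{3.6} as the decision procedure, for each pair $(a,b)$ the truth of the quantifier-free $\phi(x|\mathbf{a})(y|\mathbf{b})$ under $\M_f$ is determined by some finite initial segment of $f$; so given $\sigma$ of length $n+1$, I would let $\mu(\sigma)$ be the least $a\le n$ such that, for every $b\le n$ whose truth value for $\phi(x|\mathbf{a})(y|\mathbf{b})$ is already determined by $\sigma$ (via the corollary's finite-query criterion), that value is \emph{true}, and $\mu(\sigma)=\infty$ if there is no such $a$.

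Next I would verify the two overguessability conditions. If $f\in S$, fix the least witness $a_0$; then $\forall y\,\phi(x|\mathbf{a_0})$ holds, so every instance $\phi(x|\mathbf{a_0})(y|\mathbf{b})$ is true and, by Corollary~\ref{3.6}, confirmed from a finite segment. Once $n$ is large enough that $a_0\le n$ and all the finitely many relevant confirmations for $b\le n$ succeed, no smaller $a$ can ever beat $a_0$ permanently (any smaller $a<a_0$ fails $\forall y\,\phi$ at some $b$, and that failure is eventually detected), so $\mu(f(0),\ldots,f(n))$ is eventually bounded by $a_0$, giving condition~(1). If $f\notin S$, then for \emph{every} $a$ the sentence $\forall y\,\phi(x|\mathbf{a})$ fails, i.e.\ each $a$ has some $b_a$ with $\phi(x|\mathbf{a})(y|\mathbf{b_a})$ false, detectable from a finite segment; hence for any threshold $N$, once $n$ is large enough that every $a< N$ has had its falsifying instance confirmed, $\mu(f(0),\ldots,f(n))\ge N$, so $\mu\to\infty$, giving condition~(2).

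The main obstacle I anticipate is bookkeeping the finite-query decision procedure cleanly: Corollary~\ref{3.6} guarantees that each quantifier-free instance $\phi(x|\mathbf{a})(y|\mathbf{b})$ is decided by some finite initial segment without overquerying, but to make $\mu$ a genuine total function on $\N^{<\N}$ I must phrase ``determined by $\sigma$'' so that $\mu(\sigma)$ depends only on $\sigma$ and not on $f$. I would handle this by letting $\mu(\sigma)$ simulate the inductive semantics on the partial information in $\sigma$, treating an instance as ``not yet determined'' precisely when the computation would need to query an index beyond $\mathrm{length}(\sigma)-1$; the corollary then ensures that for the true $f$ every needed determination eventually becomes available as $n$ grows. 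The remaining verifications are the routine monotonicity and convergence arguments sketched above.
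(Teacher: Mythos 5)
Your proposal is correct and is essentially the paper's own argument: the paper also defines $\mu(\sigma)$ by simulating the inductive semantics on the initial segment (padding with zeros), giving the benefit of the doubt to any instance $\phi(x,y|\bar a,\bar b)$ whose evaluation would require querying beyond the segment, and taking the least candidate witness not yet refuted, with the same two verifications via the Weak Substitution Lemma and Corollary~\ref{3.6}. The only differences are cosmetic (your restriction to $a\le n$ and $b\le n$ versus the paper's quantification over all pairs), and these do not affect either overguessability condition.
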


\begin{proof}
Given a tuple $(n_0,\ldots,n_k)$, define $\mu(n_0,\ldots,n_k)$ as follows.
Let $h:\N\to\N$ be defined by $h(i)=n_i$ if $i\leq k$, $h(i)=0$ otherwise.
Given a pair $(a,b)\in\N^2$, consider the sentence $\phi(x,y|\bar{a},\bar{b})$.
Attempt to check whether $\M_h\models \phi(x,y|\bar{a},\bar{b})$, using the
inductive definition of the semantics of $\M_h$.  If, in so doing, you must
query $h(i)$ for some $i>k$, say that \emph{the attempt failed}.  Otherwise,
say \emph{the attempt succeeded}.  If the attempt failed, \emph{or} if
$\M_h\models \phi(x,y|\bar{a},\bar{b})$, then say that $(a,b)$ is \emph{nice}.

Call a number $a$ \emph{very nice} if $(a,b)$ is nice for every $b$.  If there is any very nice number,
then let $\mu(n_0,\ldots,n_k)$ be the smallest very nice number.  Otherwise let $\mu(n_0,\ldots,n_k)=\infty$.

I claim the above $\mu$ witnesses that $S$ is overguessable.

First, suppose $f\in S$.  Since $S$ is defined by $\exists x\,\forall y\,\phi$,
$\M_f\models \exists x\,\forall y\,\phi$.
By the Weak Substitution Lemma, for some $a$, $\M_f\models \phi(x,y|\bar{a},\bar{b})$ for every $b$.
When we attempt to check whether $\M_h\models \phi(x,y|\bar{a},\bar{b})$ in the definition of
$\mu(f(0),\ldots,f(k))$,
if the attempt succeeds, then $\M_h\models \phi(x,y|\bar{a},\bar{b})$ because $\M_f\models \phi(x,y|\bar{a},\bar{b})$
and we never had to look at the part of $h$ which disagrees with $f$.  So $(a,b)$ is nice for
every $b$, so $a$ is very nice, so $\mu(f(0),\ldots,f(k))$ is bounded by $a$.

Next, suppose $f\not\in S$.  Let $a\in\N$, I claim $\mu(f(0),...,f(n))\not=a$ for all $n$ sufficiently large. 
Since $f\not\in S$, $\M_f\not\models \exists x\,\forall y\,\phi$.
By the Weak Substitution Lemma, there is some $b$ such that $\M_f\not\models \phi(x,y|\bar{a},\bar{b})$.
Since $\phi$ is quantifier-free, we invoke Corollary~\ref{3.6} on $\neg\phi(x,y|\bar{a},\bar{b})$
and find $k$ 
such that
$\M_g\not\models \phi(x,y|\bar{a},\bar{b})$ whenever $g$ extends $(f(0),\ldots,f(k))$, and,
to check whether $\M_g\models \phi(x,y|\bar{a},\bar{b})$, we do not need to query $g(i)$
for $i>k$.  Then, in the definition of $\mu(f(0),\ldots,f(k))$, for pair $(a,b)$, the attempt succeeds
and $\M_h\not\models \phi(x,y|\bar{a},\bar{b})$, so $(a,b)$ is not nice, so $a$ is not very nice,
so $\mu(f(0),\ldots,f(k))\not=a$, in fact, $\mu(f(0),\ldots,f(j))\not=a$ for all $j\geq k$.
By arbitrariness of $a$, $\mu(f(0),...,f(n))\to\infty$.
\end{proof}

\begin{lemma}
\label{4.7oneway}
Suppose a set $S\subseteq\N^{\N}$ is
defined
by some sentence $\forall x\,\exists y\,\phi$ and also by some sentence $\exists x\,\forall y\,\psi$,
where $\phi$ and $\psi$ are quantifier-free.
Then $S$ is guessable.
\end{lemma}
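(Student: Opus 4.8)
The plan is to extract two overguessers from the two hypothesized definitions and then combine them by a comparison. The definition $\exists x\,\forall y\,\psi$ of $S$ puts $S$ directly within the scope of Lemma~\ref{4.4}, yielding an overguesser $\mu:\N^{<\N}\to\N\cup\{\infty\}$ for $S$: the quantity $\mu(f(0),\ldots,f(n))$ is eventually bounded when $f\in S$ and tends to $\infty$ when $f\notin S$.

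For the complement I would first negate the other definition. Since $S$ is defined by $\forall x\,\exists y\,\phi$, for every $f$ we have $f\notin S$ iff $\M_f\models\neg\forall x\,\exists y\,\phi$, i.e.\ iff $\M_f\models\exists x\,\forall y\,\neg\phi$. As $\neg\phi$ is quantifier-free, the complement $\N^{\N}\setminus S$ is defined by a sentence of the form $\exists x\,\forall y\,\phi'$, so Lemma~\ref{4.4} applies to it as well and produces an overguesser $\nu$ for the complement. Thus $\nu(f(0),\ldots,f(n))$ is eventually bounded when $f\notin S$ and tends to $\infty$ when $f\in S$.

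Now I would define the guesser by racing $\mu$ against $\nu$: set
\[
G(f(0),\ldots,f(n))=\begin{cases}1,&\text{if }\mu(f(0),\ldots,f(n))<\nu(f(0),\ldots,f(n)),\\ 0,&\text{otherwise,}\end{cases}
\]
reading $\infty$ as larger than every natural number. Since the definition of guesser imposes no computability requirement, the fact that $\mu,\nu$ are generally uncomputable is harmless. To verify that $G$ guesses $S$, take $f\in S$: then $\mu(f(0),\ldots,f(n))\le M$ for all large $n$ and some fixed $M$, while $\nu(f(0),\ldots,f(n))\to\infty$; hence for all large $n$ we have $\nu>M\ge\mu$, so $G=1$. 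Symmetrically, if $f\notin S$ then $\nu$ is eventually bounded by some $N$ while $\mu\to\infty$, so eventually $\mu>N\ge\nu$ and $G=0$. In either case $G(f(0),\ldots,f(n))$ converges to the correct value.

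The only real insight is the recognition that the two syntactic shapes $\forall\exists$ and $\exists\forall$ supply, respectively, overguessability of the complement and of $S$ itself, so that a single application of Lemma~\ref{4.4} to each side furnishes a matched pair of monitors whose comparison stabilizes. Everything after that is the routine verification above. I expect the one point needing slight care is the bookkeeping with the value $\infty$ in the comparison, and confirming that ``eventually bounded versus $\to\infty$'' genuinely forces the strict inequality to settle in the right direction on each side.
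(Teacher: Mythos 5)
Your proposal is correct and is essentially the paper's own proof: both apply Lemma~\ref{4.4} to the $\exists\forall$ definition of $S$ and to the $\exists\forall$ definition of $S^c$ obtained by negating the $\forall\exists$ sentence, then compare the two overguessers (the paper uses $\mu\leq\nu$ where you use $\mu<\nu$, an immaterial difference). No gaps.
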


\begin{proof}
By Lemma~\ref{4.4}, find $\mu:\N^{<\N}\to\N\cup\{\infty\}$ which overguesses $S$.  And since $S^c$ is defined by
$\exists x\,\forall y\,\neg\phi$, use Lemma~\ref{4.4} again to find $\nu:\N^{<\N}\to\N\cup\{\infty\}$ which
overguesses
$S^c$.

Define $G:\N^{<\N}\to\{0,1\}$ by saying $G(n_0,\ldots,n_k)=1$ if $\mu(n_0,\ldots,n_k)\leq \nu(n_0,\ldots,n_k)$
and
$0$
otherwise.
If $f\in S$ then $\mu(f(0),\ldots,f(k))$ is eventually bounded by a finite number and
$\nu(f(0),\ldots,f(k))\to\infty$,
so $G(f(0),\ldots,f(k))$ converges to $1$.  The other case is similar.
\end{proof}

Combining Lemmata~\ref{4.2} and \ref{4.7oneway} proves Theorem~\ref{4.7}.

\begin{proposition}
\label{4.5}
If $S\subset \N^{\N}$
is overguessable, then it is defined by some sentence $\exists x\forall y\phi$ with $\phi$ quantifier-free.
\end{proposition}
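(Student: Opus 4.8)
The plan is to read the overguessing dichotomy off $\mu$ directly as an $\exists x\,\forall y$ sentence. Let $\mu:\N^{<\N}\to\N\cup\{\infty\}$ witness overguessability of $S$. Before anything else I must eliminate the value $\infty$, since $\mathscr{L}_{\mbox{max}}$ only supplies function symbols $\tilde{G}$ for honest functions $G:\N^{<\N}\to\N$. So I replace $\mu$ by the truncation $\mu'(n_0,\ldots,n_k)=\min\{\mu(n_0,\ldots,n_k),\,k\}$ (reading $\min\{\infty,k\}=k$), which is $\N$-valued. A one-line check shows $\mu'$ still overguesses $S$: if $f\in S$ then eventually $\mu(f(0),\ldots,f(n))\le B$ for some finite $B$, whence $\mu'(f(0),\ldots,f(n))\le\mu(f(0),\ldots,f(n))\le B$ too; if $f\notin S$ then $\mu(f(0),\ldots,f(n))\to\infty$, and since the last index also tends to infinity, the minimum $\mu'(f(0),\ldots,f(n))\to\infty$. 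Thus I may assume $\mu:\N^{<\N}\to\N$ is $\N$-valued, so that it is named by an $\N^{<\N}$-ary function symbol $\tilde{\mu}$ of $\mathscr{L}_{\mbox{max}}$.

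Now ``$f\in S$'' is precisely ``$\mu(f(0),\ldots,f(n))$ is eventually bounded,'' i.e.\ $\exists B\,\exists N\,\forall n>N\;\mu(f(0),\ldots,f(n))\le B$. To compress the two existentials into the single $\exists x$ allowed by the template, I fix a pairing with unary projection symbols $\tilde{\pi}_1,\tilde{\pi}_2$ (available since every $w:\N\to\N$ names a symbol), thinking of $x$ as coding the pair $(B,N)=(\pi_1(x),\pi_2(x))$. Extending the paper's convention of writing $\tilde{>}$ infix to the symbol $\tilde{\le}$ as well, I take the sentence
\[\exists x\,\forall y\,\Big((y>\tilde{\pi}_2(x))\rightarrow\tilde{\mu}(\mathbf{f}(z)(\mathbf{0}),\cdots_z,\mathbf{f}(z)(y))\le\tilde{\pi}_1(x)\Big),\]
whose matrix $\phi$ is quantifier-free. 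By the semantics of $\M_f$, the ellipsis term evaluates under any assignment $s$ to $\mu(f(0),\ldots,f(y^{s}))$, so $\phi$ asserts exactly ``if $y>N$ then $\mu(f(0),\ldots,f(y))\le B$.''

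It remains to check that this sentence defines $S$, which I carry out exactly along the lines of the proof of Lemma~\ref{4.4}, using the Weak Substitution Lemma (Lemma~\ref{3.7}) to trade quantifiers for numerals. If $f\in S$, choose $B,N$ from eventual boundedness and let $a$ code $(B,N)$; then $\M_f\models\phi(x,y|\bar{a},\bar{b})$ for every $b$ (vacuously when $b\le N$, and because $\mu(f(0),\ldots,f(b))\le B$ when $b>N$), so $\M_f\models\forall y\,\phi(x|\bar{a})$ and hence $\M_f\models\exists x\,\forall y\,\phi$. Conversely, if $f\notin S$, fix any $a$ and decode $(B,N)$; since $\mu(f(0),\ldots,f(n))\to\infty$ there is some $b>N$ with $\mu(f(0),\ldots,f(b))>B$, so $\M_f\not\models\phi(x,y|\bar{a},\bar{b})$ and therefore $\M_f\not\models\forall y\,\phi(x|\bar{a})$; as $a$ was arbitrary, $\M_f\not\models\exists x\,\forall y\,\phi$.

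I expect the one genuinely nontrivial step to be the first: an overguesser has codomain $\N\cup\{\infty\}$, whereas $\mathscr{L}_{\mbox{max}}$ offers only $\N$-valued function symbols, so $\mu$ cannot be named outright. The \emph{truncation} $\min\{\mu,k\}$ is what repairs this while preserving the overguessing dichotomy: a finite value is only ever pushed downward (harmless for the ``eventually bounded'' side), while the value $\infty$ is forced upward by the growing index $k$ (so the ``tends to $\infty$'' side survives). Everything after that is the familiar packaging of two existentials into one via a pairing, together with the quantifier bookkeeping already performed in Lemma~\ref{4.4}.
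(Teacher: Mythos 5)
Your proof is correct and takes essentially the same route as the paper's: replace $\mu$ by an $\N$-valued function that $\mathscr{L}_{\mbox{max}}$ can name, express ``eventually bounded'' as $\exists\,\exists\,\forall$, and collapse the two existentials via a pairing function. The only difference is cosmetic --- the paper sends $\infty$ to $0$ and shifts finite values up by one (at the cost of an extra ``eventually nonzero'' conjunct), whereas your truncation $\min\{\mu,k\}$ keeps the matrix a single inequality; both handle $\infty$ correctly.
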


\begin{proof}
Suppose $S$ is overguessed by 
$\mu:\N^{<\N}\to\N\cup\{\infty\}$.
Define $\mu':\N^{\N}\to\N$ by saying $\mu'(n)=\mu(n)+1$ if $\mu(n)\not=\infty$, $\mu'(n)=0$ if $\mu(n)=\infty$.
If $f:\N\to\N$, then $f\in S$ if and only if the sequence
$\mu(f(0),\ldots,f(n))$ is eventually bounded by some finite number.
This is true if and only if $\mu'(f(0),\ldots,f(n))$ is eventually bounded by some finite number and eventually nonzero.
This latter equivalence can be expressed by
\[
\exists m_1\,\exists m_2\,\forall m_3\,((m_3>m_2)\rightarrow (0<\mu'(f(0),\ldots,f(m_3))<m_1)).
\]
Let $d:\N\to\N^2$ be any onto map from $\N$ to $\N^2$.
Write $d(n)=(d_1(n),d_2(n))$, thus defining two functions $d_1,d_2:\N\to\N$.
Then the above formula is equivalent to
\[
\exists m\,\forall m_3\,((m_3>d_2(m))\rightarrow (0<\mu'(f(0),\ldots,f(m_3))<d_1(m))).\]
This can be formalized in $\mathscr{L}_{\mbox{max}}$, providing a sentence $\exists x\,\forall y\,\phi$
which defines $S$, with $\phi$ quantifier-free.
\end{proof}

\begin{example}
\label{4.6}
Every countable subset of $\N^{\N}$ is overguessable.
\end{example}

\begin{proof}
Let $S\subseteq \N^{\N}$ be countable.
Define $g:\N^2\to\N$ by saying $g(m,n)=h_m(n)$ where $h_m$ is the $m$th element of $S$.
Then $S$ is defined by
\[
\exists x\,\forall y\,\tilde{g}(x,y)=\mathbf{f}(y).\]
By Lemma~\ref{4.4}, $S$ is overguessable.\end{proof}

\begin{remark}
\label{remark2}
Guessable and overguessable sets of sequences are analogous to computable and computably enumerable sets of naturals,
respectively.  One shows that $\Delta_1$ sets (in a much weaker logical setting than $\mathscr{L}_{\mbox{max}}$)
of naturals are computable by showing that they and their complements are c.e.~by using the characterization
of c.e.~sets as sets which are $\Sigma_1$-definable (in the weaker setting).  By comparison, I have shown that
$\Delta_2$ sets of sequences (in a very strong logical setting) are guessable by showing that they and their
complements are overguessable by using the characterization of overguessable sets as $\Sigma_2$-definable (in the
stronger setting).  These analogous phenomena in computability theory have been written about by Rogers 
\cite{rogers},
Enderton \cite{enderton}, Bilaniuk \cite{bilaniuk}, and many other authors.\end{remark}

We will elaborate more on Remark~\ref{remark2} in Section~\ref{dstsection}.

\begin{lemma}
\label{4.8}
Suppose $S\subseteq\N^{\N}$ is definable by a sentence $\forall x\,\exists y\,\phi$ where
$\phi$ is quantifier-free.  If $S$ is countable then $S$ is guessable.
\end{lemma}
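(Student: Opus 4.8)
The plan is to reduce the statement to the machinery already assembled, rather than to build a guesser by hand. Recall that Lemma~\ref{4.7oneway} (the reverse direction of Theorem~\ref{4.7}) tells us a set is guessable as soon as it is defined both by a sentence $\forall x\,\exists y\,\phi$ and by a sentence $\exists x\,\forall y\,\psi$ with $\phi,\psi$ quantifier-free. The hypothesis of the lemma hands us the $\forall x\,\exists y$ definition outright, so the only thing left to produce is an $\exists x\,\forall y$ definition of $S$ with quantifier-free matrix. In other words, the whole proof collapses to supplying the ``other half'' of the characterization in Theorem~\ref{4.7}.

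This is exactly where countability enters. Since $S$ is countable, Example~\ref{4.6} already exhibits the required sentence: enumerating $S$ as $h_0,h_1,\ldots$ and setting $g(m,n)=h_m(n)$, the set $S$ is defined by $\exists x\,\forall y\,(\tilde{g}(x,y)=\mathbf{f}(y))$, whose matrix is quantifier-free. Equivalently, one may read off from Example~\ref{4.6} that $S$ is overguessable and then apply Proposition~\ref{4.5} to convert overguessability into an $\exists x\,\forall y$ definition; both routes land on the same kind of sentence, so I would simply cite whichever is cleaner.

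With an $\exists x\,\forall y\,\psi$ definition from Example~\ref{4.6} and the given $\forall x\,\exists y\,\phi$ definition both in hand, Lemma~\ref{4.7oneway} immediately yields that $S$ is guessable.

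I do not expect a genuine technical obstacle here, since all the real work has been front-loaded into Lemmata~\ref{4.4} and~\ref{4.7oneway}, Proposition~\ref{4.5}, and Example~\ref{4.6}; the proof is essentially bookkeeping. The one point I would be careful to check is that the sentence coming from the countable enumeration really has the required syntactic shape --- a single block $\exists x$ followed by a single block $\forall y$ over a quantifier-free matrix --- and that the countability hypothesis is used precisely (and only) to secure this $\exists x\,\forall y$ definition, with the $\forall x\,\exists y$ definition contributed separately by the hypothesis.
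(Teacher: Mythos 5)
Your proposal is correct and matches the paper's own proof essentially verbatim: the paper likewise observes that countability yields the $\exists x\,\forall y$ definition via the construction in Example~\ref{4.6}, combines it with the hypothesized $\forall x\,\exists y$ definition, and invokes Theorem~\ref{4.7} (i.e.\ Lemma~\ref{4.7oneway}) to conclude guessability.
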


\begin{proof}
Suppose $S$ is countable.  In the proof of Example~\ref{4.6}, we showed $S$ is 
definable by
a sentence $\exists x\,\forall y\,\psi$ where $\psi$ is quantifier-free.  By Theorem~\ref{4.7}, $S$ is 
guessable.\end{proof}

\begin{example}
\label{4.9}
There are uncountably many permutations of $\N$.
\end{example}

\begin{proof}
A function
$f:\N\to\N$ is a permutation iff 
\[
\forall m_1\,\forall m_2\,\exists n\,((f(m_1)=f(m_2)\rightarrow m_1=m_2)\wedge f(n)=m_2).\]
By appropriately coding $\langle m_1,m_2\rangle$, the set $S$ of permutations is
defined by a sentence $\forall x\,\exists y\,\phi$ where $\phi$ is quantifier-free.

Permutations are not guessable.  If $G$ were a permutation-guesser, it would diverge
on the following sequence.  Let $f(0)=0$, $f(1)=1$, and so on until $G(f(0),\ldots,f(k_1))=1$
(this must happen since $G$ would converge to $1$ if we kept going forever).
Then skip a number, $f(k_1+1)=k_1+2$, $f(k_1+2)=k_1+3$, and keep going until $G(f(0),\ldots,f(k_2))=0$.
Then fill in the gap, $f(k_2+1)=k_1+1$, and resume where we left off, $f(k_2+2)=k_2+2$, and so
on until $G(f(0),\ldots,f(k_3))=1$.  This process shows permutations are unguessable.

By Lemma~\ref{4.8}, $S$ is uncountable.
\end{proof}

\begin{example}
\emph{(Cantor)}
There are uncountably many real numbers.
\end{example}

\begin{proof}

Consider the set $A$ of numbers in the interval $(0,1)$ which have infinitely many $5$s in
their decimal expansions.  There is an obvious bijection between $A$ and the set $S$ of sequences
$f:\N\to\{0,9\}$ such that $f(n)=5$ infinitely often.
This set $S$ is defined by
\[\forall x\,\exists y\,((y>x)\wedge \mathbf{f}(y)=\mathbf{5}\wedge \mathbf{f}(x)\geq \mathbf{0} \wedge \mathbf{f}(x)\leq 
\mathbf{9}).\]
By 
Lemma~\ref{4.8}, if $S$ 
is countable then it is guessable.
But it is not:  if $G$ were a guesser for $S$, then we could define a sequence on whose initial segments $G$ does 
not converge.  Namely, let $f(0)=\ldots=f(x_k)=0$ where $x_k$ is big enough that $G(f(0),...,f(x_k))=0$, and then 
let
$f(x_k+1)=\cdots=f(x_{k+1})=5$, where $x_{k+1}>x_k$ is big enough that $G(f(0),...,f(x_{k+1}))=1$.  And so on, 
alternating, forever.  This shows $S$ is not guessable, so $S$ is not countable, so $A$ is uncountable, so 
$\mathbb{R}$ is uncountable.
\end{proof}

\begin{lemma}
\label{4.10}
If $S\subseteq \N^{\N}$ is definable by a sentence $\phi$
without nested quantifiers (that is, no quantifier appearing in the scope of another),
then $S$ is guessable.
\end{lemma}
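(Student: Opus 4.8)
The plan is to combine Theorem~\ref{4.7} with two elementary facts: that the guessable sets are closed under the Boolean operations, and that a sentence with no nested quantifiers is a propositional combination of extremely simple sentences. First I would record the Boolean closure directly at the level of guessers. If $G_1$ guesses $S_1$ and $G_2$ guesses $S_2$, then $1-G_1$ guesses $S_1^c$, the pointwise maximum $\max(G_1,G_2)$ guesses $S_1\cup S_2$, and the pointwise minimum $\min(G_1,G_2)$ guesses $S_1\cap S_2$. Each is a one-line convergence check using that a guesser converges on \emph{every} input: for $f\notin S_1\cup S_2$ both $G_i(f(0),\ldots,f(n))$ converge to $0$, so their maximum does too, while for $f\in S_1\cup S_2$ at least one $G_i$ converges to $1$ (and the other still converges), forcing the maximum eventually to $1$. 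The cases for intersection and complement are identical.

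Next I would show that the three simplest kinds of sentences define guessable sets: a quantifier-free sentence $\phi$, a sentence $\exists x\,\theta$, and a sentence $\forall x\,\theta$, where in the latter two $\theta$ is quantifier-free with $FV(\theta)\subseteq\{x\}$. In each case I would exhibit both a $\forall\exists$ and an $\exists\forall$ defining sentence by prepending or appending a vacuous quantifier, and then invoke Theorem~\ref{4.7}. For instance $\exists x\,\theta$ is equivalent, in every $\M_f$, both to $\exists x\,\forall y\,\theta$ and to $\forall y\,\exists x\,\theta$ for a fresh variable $y$ not occurring in $\theta$, since a quantifier binding a variable absent from its scope does not change truth; likewise $\phi$ is equivalent to $\exists x\,\forall y\,\phi$ and to $\forall x\,\exists y\,\phi$, and $\forall x\,\theta$ to $\exists y\,\forall x\,\theta$ and to $\forall x\,\exists y\,\theta$. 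Hence all three define guessable sets.

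Finally I would assemble the pieces using the syntactic shape of $\phi$. Because no quantifier of $\phi$ lies in the scope of another, $\phi$ is built from $\neg,\wedge,\vee$ applied to ``blocks,'' where each block is either a quantifier-free sentence or a formula $Qx\,\theta$ with $Q\in\{\exists,\forall\}$ and $\theta$ quantifier-free. Moreover each block is itself a sentence: any free variable of $\theta$ other than $x$ would have to be bound by some quantifier whose scope contains the block, contradicting the no-nesting hypothesis, so $FV(\theta)\subseteq\{x\}$. By the second step each block defines a guessable subset of $\N^{\N}$, and by the Boolean closure of the first step the set defined by their propositional combination---namely $S$---is guessable.

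The step I expect to demand the most care is this structural decomposition: making precise what ``no nested quantifiers'' forces on the shape of $\phi$, and in particular verifying that every quantified block has no stray free variables, so that it genuinely defines a subset of $\N^{\N}$ to which the earlier results apply. The remaining ingredients are either the short guesser-convergence bookkeeping of the first step or the routine ``vacuous quantifier'' manipulations of the second, both of which are standard once the semantics of $\M_f$ are granted.
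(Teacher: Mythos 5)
Your proposal is correct and follows essentially the same route as the paper: decompose $\phi$ into quantifier-free and single-quantifier sentential blocks, show each block defines a guessable set via Theorem~\ref{4.7}, and close under the Boolean operations. You simply make explicit three things the paper treats as immediate (the $\max/\min/1-G$ guessers, the vacuous-quantifier padding needed to put each block in both $\forall\exists$ and $\exists\forall$ form, and the check that each block is genuinely a sentence), all of which are carried out correctly.
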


\begin{proof}
If so, then $\phi$ is a propositional combination of quantifier-free
sentences and sentences of the form $\forall x\,\phi_0$ and $\exists x\,\phi_1$ where $\phi_0$, $\phi_1$
are quantifier-free.  The sets defined by these component sentences are guessable by Theorem~\ref{4.7}.
Clearly guessable sets are closed under union, intersection, and complement, so $S$ itself is
guessable.
\end{proof}

\begin{example}
\label{4.11}
The definition of onto functions cannot be simplified to get rid
of nested quantifiers, not even with the full power of $\mathscr{L}_{\mbox{max}}$.
\end{example}

\begin{proof}
By Lemma~\ref{4.10} and the fact the set of onto functions is not guessable, see 
Theorem~\ref{2.2}.
\end{proof}

\section{Descriptive Set Theory}
\label{dstsection}

In this section we will elaborate further on Remark~\ref{remark2}.
In descriptive set theory, $\N^{\N}$ is endowed with the topology whose basic open sets are those sets of the form
\[
\{f\in\N^{\N}\,:\,\mbox{$f$ extends $f_0$}\}\]
where $f_0\in\N^{<\N}$.  Since $\N^{<\N}$ is countable, $\N^{\N}$ is second countable in the sense of basic topology.
A set is called $G_{\delta}$ if it is a countable intersection of open sets, and $F_{\sigma}$ if it is a countable union
of closed sets.  A set is $\mathbf{\Delta}_2^0$ if it is both $G_{\delta}$ and $F_{\sigma}$ (equivalently, if it and its
complement are both $G_{\delta}$).  This $\mathbf{\Delta}_2^0$ is one of 
the levels of the \emph{Borel hierarchy} which many authors, including 
Moschovakis \cite{moschovakis}, have written about.

\begin{theorem}
Let $S\subseteq\N^{\N}$.  Then $S$ is guessable if and only if $S$ is $\mathbf{\Delta}_2^0$.
\end{theorem}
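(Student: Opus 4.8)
The plan is to factor the equivalence through the definability characterization of Theorem~\ref{4.7}. By that theorem, $S$ is guessable iff $S$ is defined both by some $\forall x\,\exists y\,\phi$ and by some $\exists x\,\forall y\,\psi$ with $\phi,\psi$ quantifier-free, whereas $S$ is $\mathbf{\Delta}_2^0$ iff $S$ is both $G_\delta$ and $F_\sigma$. So it suffices to prove two bridging facts: a set is $\forall x\,\exists y$-definable (quantifier-free matrix) iff it is $G_\delta$, and it is $\exists x\,\forall y$-definable iff it is $F_\sigma$. These are dual under complementation, since $S^c$ is $F_\sigma$ iff $S$ is $G_\delta$, and $\neg\,\forall x\,\exists y\,\phi$ is logically $\exists x\,\forall y\,\neg\phi$ with $\neg\phi$ again quantifier-free, so I would concentrate on the $G_\delta$ case.

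First, the easy direction. I would observe that a quantifier-free sentence defines a clopen subset of $\N^{\N}$: by Corollary~\ref{3.6}, $\M_f\models\theta$ iff some finite initial segment of $f$ already forces $\theta$, which makes $\{f:\M_f\models\theta\}$ open; applying the same to $\neg\theta$ shows it is also closed. Using the Weak Substitution Lemma (Lemma~\ref{3.7}) to identify $\M_f\models\exists y\,\phi(x|\bar a)$ with membership in $\bigcup_b\{f:\M_f\models\phi(x|\bar a,y|\bar b)\}$, the inner $\exists y$ turns a family of clopen sets into an open set, and the outer $\forall x$ intersects countably many open sets, yielding a $G_\delta$ set. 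Hence every $\forall x\,\exists y$-definable set is $G_\delta$, and dually every $\exists x\,\forall y$-definable set is $F_\sigma$.

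The substantive direction is to realize an arbitrary $G_\delta$ set $S=\bigcap_n U_n$ (each $U_n$ open) by a sentence $\forall x\,\exists y\,\phi$. Here I would exploit the full strength of $\mathscr{L}_{\mbox{max}}$: for an open set $U$, let $\chi_U^\ast:\N^{<\N}\to\{0,1\}$ send a finite sequence $\sigma$ to $1$ exactly when the basic open set of extensions of $\sigma$ lies inside $U$, so that $f\in U$ iff some initial segment $(f(0),\ldots,f(b))$ has $\chi_U^\ast$-value $1$. These functions need not be computable, but every function $\N^{<\N}\to\N$ has a symbol in $\mathscr{L}_{\mbox{max}}$, so that is no obstacle. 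To handle the whole family at once I would use a pairing function to smuggle the index into the ellipsis: fix $\tilde{\mathrm{pair}}$ interpreted as a bijection $\N^2\to\N$, let $u$ be the term $\tilde{\mathrm{pair}}(x,\mathbf{f}(z))$, and define $G:\N^{<\N}\to\N$ by $G(\langle n,b_0\rangle,\ldots,\langle n,b_k\rangle)=\chi_{U_n}^\ast(b_0,\ldots,b_k)$, arbitrarily on inputs not of this form. Then the ellipsis term $\tilde G(u(\mathbf{0}),\cdots_z,u(y))$ evaluates in $\M_f$ under $x\mapsto n$, $y\mapsto b$ to $G(\langle n,f(0)\rangle,\ldots,\langle n,f(b)\rangle)=\chi_{U_n}^\ast(f(0),\ldots,f(b))$. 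Taking $\phi$ to be $\tilde G(u(\mathbf{0}),\cdots_z,u(y))=\mathbf{1}$, the sentence $\forall x\,\exists y\,\phi$ says precisely that for every $n$ some initial segment of $f$ witnesses $f\in U_n$, that is, $f\in\bigcap_n U_n=S$. The $F_\sigma$ case is the same construction applied to $S=\bigcup_n C_n$, replacing $\chi_{U_n}^\ast$ by the indicator of ``$(f(0),\ldots,f(b))$ does not yet force $f$ into $C_n^c$'' and the prefix $\forall x\,\exists y$ by $\exists x\,\forall y$.

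Combining everything: $S$ is $\mathbf{\Delta}_2^0$ iff $S$ is $G_\delta$ and $F_\sigma$, iff $S$ is both $\forall x\,\exists y$- and $\exists x\,\forall y$-definable, iff, by Theorem~\ref{4.7}, $S$ is guessable. The main obstacle is the substantive direction, and specifically the encoding step: recognizing that the $\N^{<\N}$-ary function symbols together with the ellipsis are exactly the device needed to express ``a finite initial segment witnesses membership in an open set,'' and arranging via the pairing term $u$ for a single function symbol to carry both the outer-quantifier index $n$ and the initial segment of $f$. Once that term is set up, verifying that the quantifier alternations match countable intersections and unions is routine, as is the clopen observation underlying the easy direction.
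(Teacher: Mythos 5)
Your proposal is correct and takes essentially the same route as the paper: both directions are factored through Theorem~\ref{4.7}, with Corollary~\ref{3.6} giving openness of quantifier-free-definable sets for the $G_\delta$/$F_\sigma$ direction, and an $\N^{<\N}$-ary function symbol encoding ``some initial segment of $f$ witnesses membership in the open set $U_n$'' for the converse. The only differences are cosmetic: the paper routes the converse through second countability and a function $\tau$ detecting exact basic-open stems, where you use the forcing characterization of open sets directly and make explicit (via the pairing term $u$) the formalization step that the paper leaves as ``this can be formalized in $\mathscr{L}_{\mbox{max}}$.''
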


\begin{proof} ($\Rightarrow$)  Suppose $S$ is guessable.
By Lemma~\ref{4.2}, $S$ is defined by a sentence $\forall x\,\exists y\,\phi$ where $\phi$ is quantifier-free.
For every $i,j\in\N$, let $S_i\subseteq\N^{\N}$ be the set defined by $\exists y\,\phi(x|i)$ and let $T_{ij}\subseteq\N^{\N}$
be the set defined by $\phi(x,y|i,j)$.
It follows from Corollary~\ref{3.6} that each $T_{ij}$ is open.  Each $S_i=\cup_j T_{ij}$, so each $S_i$ is open.
Since $S=\cap_i S_i$, $S$ is $G_{\delta}$.  Since $S^c$ is also guessable, identical reasoning shows $S^c$ is $G_{\delta}$, so
$S$ is $\mathbf{\Delta}_2^0$.

($\Leftarrow$)
Suppose $S$ is $\mathbf{\Delta}_2^0$.  Write $S=\cap_{i\in\N}S_i$ where each $S_i$ is open.  By second countability,
write $S_i=\cup_{j\in\N}T_{ij}$ where each $T_{ij}$ is basic open.
By the nature of basic open sets of $\N^{\N}$, there are $T_{ij}^0\in\N^{<\N}$ such that each $T_{ij}$ is exactly the set
of infinite extensions of $T_{ij}^0$.
Let $\tau:\N^{<\N}\to\N$ be defined by saying $\tau(i,j,x_0,\ldots,x_k)=1$ if $(x_0,\ldots,x_k)=T_{ij}^0$, 
$\tau$ is $0$ everywhere else.
Define $\ell:\N^2\to\N$ by letting $\ell(i,j)$ be the length of $T_{ij}^0$.
Then for any $f:\N\to\N$, $f$ extends $T_{ij}^0$ if and only if $\tau(i,j,f(0),\ldots,f(\ell(i,j)))=1$.
Thus, $f\in S$ if and only if
\[\forall i\,\exists j\,\tau(i,j,f(0),\ldots,f(\ell(i,j)))=1.\]
This can be formalized in $\mathscr{L}_{\mbox{max}}$.
By dual reasoning applied to $S^c$, $S$ can also be defined by some $\exists i\,\forall j\,\phi$ where $\phi$ is quantifier-free.
By Theorem~\ref{4.7}, $S$ is guessable.
\end{proof}

\section{Addendum}

In January 2012, we learned that the notion of guessability
was introduced some time ago in the PhD dissertation of William
W.~Wadge \cite{wadge}.  Instead of considering guesser functions,
Wadge considered \emph{guesser sets}, calling a subset $S\subseteq
\N^{\N}$ guessable if there are disjoint sets $U,W\subseteq \N^{<\N}$
such that for every sequence $f$, $f\in S$ iff $f|k\in U$ for all but
finitely many $k$, and $f\not\in S$ iff $f|k\in W$ for all but finitely
many $k$.  This is clearly equivalent to our definition.
Wadge gave a game-theoretical proof that guessability is equivalent to
being $\mathbf{\Delta}_2^0$ (our Theorem 16) and then used this fact
to show a special case, for $\mathbf{\Delta}_2^0$ sets, of what is
now known as \emph{Wadge's lemma}, an important result about Wadge
degrees studied by descriptive set theorists.

%
%
%
%

\end{document}